\newtheorem{teo}{Theorem}[section]
\newtheorem{lemma}[teo]{Lemma}
\newtheorem{prop}[teo]{Proposition}
\newtheorem{cor}[teo]{Corollary}
\theoremstyle{definition}
\def\<{\langle}
\def\>{\rangle}
\def\a{\alpha}
\def\t{\tau}
\def\N{{\mathbb N}}
\def\Z{{\mathbb Z}}
\def\B{{\mathcal B}}
\def\1{\mathbf 1}
\def\N{{\mathbb N}}
\begin{document}

\title{On Topologies on the Group $(\Z_p)^{\N}$}

\author{I.~K.~Babenko and S.~A.~Bogatyi}

\thanks{This work was partially supported by the Russian
Foundation for Basic Research (project no.~16-01-00357A), and by
ANR-Finsler.}

\address{Universit\'e de Montpellier CNRS UMR 5149,
Institut Montpelli\'erain Alexander Grothen\-dieck, Place Eug\`ene
Bataillon, B\^at. 9, CC051, 34095 Montpellier CEDEX
5, France;\\
 Mechanics and Mathematics Faculty, M.~V.~Lomonosov Moscow State University,
Moscow 119992, Russia} \email{ivan.babenko@umontpellier.fr;
bogatyi@inbox.ru}

\maketitle

\begin{abstract}
It is proved that, on any Abelian group of infinite cardinality
${\bf m}$, there exist precisely $2^{2^{\bf m}}$ nonequivalent
bounded Hausdorff group topologies. Under the continuum
hypothesis, the number of nonequivalent compact and locally
compact Hausdorff group topologies on the group $(\Z_p)^{\N}$ is
determined.
\end{abstract}
\maketitle

The systematic study of Abelian topological groups was initiated
in Pontryagin's paper~\cite{Pontjagin34} and van~Kampen's
paper~\cite{vKampen.35} (see also~\cite{Alexander}). In these
papers, an exceptionally important relationship between an Abelian
topological group and the topological group of its characters was
revealed. It is not an exaggeration to say that this relationship
is one of the most important tools for studying Abelian
topological groups. This relationship manifests itself most
pronouncedly for compact and locally compact groups.

 Topological groups close to compact and locally compact ones are, respectively,
totally bounded and locally bounded Abelian topological groups.
This type of topologies on Abelian groups was studied by A.
Weil~\cite{Weil37}, who showed that an Abelian topological group
is totally bounded if and only if it is embedded in a compact
Abelian group as a dense subgroup and that such a group is locally
bounded if and only if it is embedded in a locally compact group
as a dense subgroup. Another deep characteristic property of
locally bounded and totally bounded topological, not necessarily
Abelian, groups were found by A.D. Alexandroff
\cite{Alexandroff42}. He showed in particular that a topological
group $G$ is locally bounded if and only if $G$ admits a left
invariant generalized measure, see \cite{Alexandroff42} for more
details.

Among all bounded group topologies on a given Abelian group there
is a maximal topology; the Weil completion of a group with this
topology coincides with the Bohr compactification of this group
and can be continuously mapped onto any compact group extension.

The next important step in the study of Abelian topological groups
was made by Kakutani \cite{Kakutani43}, who showed that the
cardinality of an infinite compact Abelian group $G$ and the
cardinality of the discrete group $G^*$ of its characters are
related by
$$
|G| = 2^{|G^*|}.
$$

In the 1950s,  interest in the study of general properties of
Abelian topological groups motivated the interest in the study of
the relationship between the algebraic structure of a given group
and the possible topologies consistent with its group structure.
This topic includes, in particular, the problem of describing
Abelian groups admitting a compact topology, which was posed by
Kaplansky in~\cite{Kaplansky54}. This problem was completely
solved by Hulanicki in~\cite{Hulanicki57, Hulanicki58}.

We mention two results concerning the possible compact topologies
on a given Abelian group. In the very early 1950s, Kulikov
\cite{Kulikov52} showed that the cardinality of the family of all
compact Abelian groups of cardinality $2^{\bf m}$, where ${\bf m}$
is an infinite cardinal, equals $2^{\bf m}$. In the light of this
result the following example of Fuchs~\cite{Fuchs59} looks quite
surprising. He constructed an Abelian group $G$ of cardinality
$2^{\bf m}$ admitting infinitely many different compact
topologies. Moreover, the cardinality of the set of pairwise
non-homeomorphic compact invariant topologies on $G$
equals~$2^{\bf m}$.

It is well known that, on any Abelian group~$G$, there are as many
nonequivalent Hausdorff group topologies as possible. To be more
precise, the cardinality of the set of nonequivalent Hausdorff
group topologies on $G$ equals~$2^{2^{|G|}}$. This result was
obtained by Podewski~\cite{Podewski77}, who studied the problem in
its most general setting. Namely, given any set with a finite or
countable system of algebraic structures, Podewski found necessary
and sufficient conditions on these algebraic structures ensuring
the existence of a nondiscrete Hausdorff topology consistent with
all of them~\cite{Podewski77}. Moreover, Podewski showed that
these conditions ensure the existence of the maximal possible
number of pairwise non-equivalent Hausdorff topologies which agree
with all algebraic structures. For Abelian groups and fields,
Podewski's conditions always hold; details can be found
in~\cite{Podewski77}. In 1970, Arnautov constructed an example of
an infinite commutative ring such that any Hausdorff ring topology
on this ring is discrete~\cite{Arnaut.70}. In the 1979s--1980s,
examples of infinite non-Abelian groups admitting only discrete
Hausdorff group topologies were
given~\cite{Hes79},~\cite{Olsh80},~\cite{Shel80}.

Fuchs' example shows that properties of various topologies that an
abstract Abelian group $G$ carries directly depend on the
algebraic properties of the group~$G$.

In our view, the influence of the algebraic structure of an
Abelian group on the structure of the set of Hausdorff group
topologies on this group has been studied insufficiently. All
topologies considered in the following are assumed to be
Hausdorff.

This paper studies the structure of the set of possible topologies
on the well known Abelian group
\begin{equation}\label{eq:1}
G = (\Z_p)^{\N} \hskip4pt ,
\end{equation}
where $\Z_p = \Z / p\Z$ for a prime $p$. Henceforth for two sets
$X$ and $Y$ the set $X^Y$ is understood in usual sense of set
theory. As customary in topology, we denote the prime field with
$p$ elements by the same symbol~$\Z_p$.

Note that there are four canonical, in a certain sense, topologies
on~$G$. The group $G$ has the natural compact topology determined
by the structure of the direct product~(\ref{eq:1}); we denote
this topology by $\tau_c$. The second topology is the discrete
topology $\tau_d$ on~$G$. The character group of $(G, \tau_c)$,
which we denote by $(G, \tau_c)^*=\oplus_{\aleph_0}\Z_p$, is
countable and discrete. Thus, we obtain two topological groups,
$(G, \tau_c) \times (G, \tau_c)^*$ and $(G, \tau_c) \times (G,
\tau_d)$. Since these two groups are algebraically isomorphic to
$G$, it follows that, in fact, we obtain two more topologies  on
$G$, which are surely different (they have different weights); we
denote these topologies by $\tau_0$ and $\tau_1$, respectively.
Obviously, these two topologies are locally compact. The following
theorem is one of the main results of this paper.

\begin{teo}\label{teo:principal}
For the Abelian group $G = (\Z_p)^{\N}$, the following assertions
hold.

\noindent \textup1. The set of all Hausdorff group topologies on
$G$ contains a subset of cardinality $2^{2^{2^{\aleph_0}}}$
consisting of pairwise non-homeomorphic totally bounded
topologies.

\noindent \textup2. Under the continuum hypothesis, $\tau_c$,
$\tau_d$, $\tau_0$, and $\tau_1$ are the only locally compact
Hausdorff group topology on $G$. Only one of them, $\tau_c$, is
compact.
\end{teo}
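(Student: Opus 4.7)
The plan is to invoke the main result of the paper announced in the abstract: on any Abelian group of infinite cardinality $\bf m$ there are precisely $2^{2^{\bf m}}$ nonequivalent bounded (i.e.\ totally bounded) Hausdorff group topologies. Applied to $G=(\Z_p)^{\N}$, whose cardinality is $2^{\aleph_0}$, this produces $2^{2^{2^{\aleph_0}}}$ pairwise nonequivalent totally bounded topologies on $G$. To upgrade ``nonequivalent'' to ``non-homeomorphic'', I would observe that two topologies on $G$ are homeomorphic iff one is the pushforward of the other by some bijection of $G$, so every homeomorphism class of topologies contains at most $2^{|G|}=2^{2^{\aleph_0}}$ pairwise nonequivalent members. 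Since $2^{2^{\aleph_0}}<2^{2^{2^{\aleph_0}}}$, a standard cardinal-arithmetic pigeonhole forces the number of homeomorphism classes among the totally bounded topologies to be exactly $2^{2^{2^{\aleph_0}}}$, providing the required subset.

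\textbf{Assertion 2, reduction to a compact open subgroup.} Let $\tau$ be any locally compact Hausdorff group topology on $G$. The structure theorem for locally compact Abelian groups yields $(G,\tau)\cong\R^{n}\times H$ with $H$ containing a compact open subgroup; since every element of $G$ has order $p$, the Euclidean factor vanishes and $(G,\tau)$ itself carries a compact open subgroup $K$. Being compact Abelian of exponent $p$, the group $K$ has a discrete Pontryagin dual which is a $\Z_p$-vector space $\oplus_\mu\Z_p$, hence $K\cong(\Z_p)^{\mu}$ as a topological group for some cardinal $\mu$.

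\textbf{Assertion 2, classification of the compact piece under CH.} If $\mu$ is finite, Hausdorffness forces the topology on the finite group $K$ to be discrete, so $\{0\}$ is open in $G$ and $\tau=\tau_d$. If $\mu$ is infinite, Kakutani's identity $|K|=2^{|K^{*}|}=2^{\mu}$ combined with $K\subseteq G$ gives $2^{\mu}\le 2^{\aleph_0}$; under CH any $\mu\ge\aleph_1$ would force $2^{\mu}\ge 2^{\aleph_1}>\aleph_1=2^{\aleph_0}$, so $\mu=\aleph_0$ and $K\cong(\Z_p)^{\N}$ topologically. The quotient $G/K$ is then discrete of exponent $p$, hence $G/K\cong\oplus_\lambda\Z_p$ for some $\lambda\le 2^{\aleph_0}=\aleph_1$; because $\Z_p$-vector spaces always split and a group section into a discrete target is automatically continuous, the short exact sequence $0\to K\to G\to G/K\to 0$ splits topologically and $(G,\tau)\cong(\Z_p)^{\N}\times\oplus_\lambda\Z_p$.

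\textbf{Assertion 2, identification with the four canonical topologies.} Under CH the admissible values of $\lambda$ are the finite cardinals, $\aleph_0$, and $\aleph_1$. The topological identity $(\Z_p)^{\N}\times(\Z_p)^{k}\cong(\Z_p)^{\N}$ absorbs all finite $\lambda$ into $\tau_c$; the case $\lambda=\aleph_0$ reproduces $\tau_0=(G,\tau_c)\times(G,\tau_c)^{*}$; and the case $\lambda=\aleph_1=2^{\aleph_0}$ reproduces $\tau_1=(G,\tau_c)\times(G,\tau_d)$, since $(\Z_p)^{\N}$ endowed with $\tau_d$ is a discrete $\Z_p$-vector space of cardinality $\aleph_1$. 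Together with the $\tau_d$ case extracted above, this exhausts all locally compact Hausdorff group topologies on $G$, and the four are seen to be pairwise distinct by comparing their weights and their compact and discrete summands. The main difficulty, in my view, is the step $\mu=\aleph_0$: without CH a compact open subgroup of some intermediate cardinality $\aleph_1,\aleph_2,\dots$ below the continuum cannot be excluded on cardinality grounds alone, and the classification would become substantially more delicate.
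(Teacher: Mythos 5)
Your proposal is correct and follows essentially the same route as the paper: assertion 1 via the $2^{2^{2^{\aleph_0}}}$ totally bounded topologies from the Comfort--Ross correspondence together with the pigeonhole bound of $2^{|G|}$ topologies per homeomorphism class, and assertion 2 via a compact open subgroup (van Dantzig / the LCA structure theorem), its identification as $(\Z_p)^{\mu}$ with $\mu=\aleph_0$ forced by Kakutani's identity under CH, a continuous splitting of the discrete quotient, and enumeration of the resulting products $F_{\bf k,\bf n}$. The only cosmetic slip is the phrase ``section into a discrete target'' (the relevant point is that the section has discrete \emph{domain}), which does not affect the argument.
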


As mentioned above, the general result of
Podewski~\cite{Podewski77} affirms that there are
$2^{2^{2^{\aleph_0}}}$ pairwise non-homeomorphic group topologies
on $G$. The first assertion of the theorem shows that the set of
all group topologies on $G$ contains a subset of the same
cardinality consisting of {\it totally bounded} pairwise
non-homeomorphic topologies on $G$.

Although totally bounded topologies are very close to compact,
under the continuum hypothesis, only four of these numerous
topologies are locally compact, and only one of them is compact.
This fact contrasts with Fuchs' result in~\cite{Fuchs59} cited
above.

Theorem~\ref{teo:principal} follows from more general statements
proved below. Thus, its first assertion is, in fact, a general
property of Abelian groups and follows directly from
Theorem~\ref{teo:number.of.topologies} proved in
Section~\ref{sec:all.topologies}.

Compact and locally compact topologies are studied in
Section~\ref{sec:loc.comp.topologies} in a more general situation.
However, in Section~\ref{sec:loc.comp.topologies}, we essentially
use the algebraic structure of~$G$, namely, the fact that each of
its element has order~$p$. The second assertion of the theorem
follows directly from
Corollary~\ref{cor:loc.compact.topology.aleph.1} and, according to
this corollary, is equivalent to the continuum hypothesis. Note
that the negation of the continuum hypothesis drastically changes
the situation with locally compact topologies; see
Proposition~\ref{prop:compact.topology.aleph.1}.

Section~\ref{sec:absolutely.bounded.topologies} considers the set
of all totally bounded topologies on the group~$G$. This subset,
which has the largest possible cardinality in the set of all
topologies on $G$, has a number of interesting structures. For
example, we can single out lattices of topologies. This, in turn,
allows us to construct chains of decreasing totally bounded
topologies on~$G$. Moreover, there exist chains of maximum length,
which have maximal or minimal elements. These structures are
studied in Section~\ref{sec:absolutely.bounded.topologies}.

One of our main tools for studying Abelian topological groups is,
as usual, the character theory founded by Pontryagin
in~\cite{Pontjagin34}. This theory has been  developed most fully
for locally compact Abelian groups, for which Pontryagin's duality
theorem is valid (see \cite{Pont73}). Among other classes of
topological groups whose connection with their character groups
remains strong enough are the already mentioned classes of totally
bounded and locally bounded groups. The relationship between these
groups and their character groups was studied by Comfort and
Ross~\cite{Comfort.Ross}.

For other classes of Abelian topological groups, connection with
character groups is not so pronounced;  nevertheless, character
groups remain an important tool for studying them.

In conclusion, we mention that our interest in this topic was
aroused by a series of questions asked to us by R.~I.~Grigorchuk.
We thank him for numerous fruitful discussions. We are also
grateful to O.~V.~Sipacheva for useful conversations.


\section{The Structure of the Set of All
Topologies}\label{sec:all.topologies}

Let  ${\bf m}$ be a finite or infinite cardinal. In what follows,
we use the notation $\prod_{\bf m}\Z_p =
\mathop{\prod}\limits_{\alpha \in A}(\Z_p)_{\alpha}$, where the
cardinality of the index set is $|A| = {\bf m}$; accordingly,
$\sum_{\bf m}\Z_p = \mathop{\oplus}\limits_{\alpha \in
A}(\Z_p)_{\alpha}$.

Given another cardinal ${\bf k}$, we set
\begin{equation}\label{eq:F_k,m}
F_{\bf k, \bf m}(p) = \prod_{\bf k}\Z_p \times \sum_{\bf m}\Z_p.
\end{equation}
In the case  where $p$ fixed or is clear from the context, we
denote groups of the form~(\ref{eq:F_k,m}) by~$F_{\bf k, \bf m}$.

Endowing the first factor with the compact product topology and
the second with the discrete topology, we turn $F_{\bf k, \bf m}$
into an Abelian topological group. In what follows, we assume that
at least one of the cardinals ${\bf k}$ and ${\bf m}$ is infinite
and the finite cardinals can take only the value~$0$. The
easy-to-prove properties of this Abelian topological group are
collected in the following proposition.
\begin{prop}\label{prop:structure.F_k_m}
The topological group $F_{\bf k, \bf m}$ is locally compact; it
has cardinality \tolerance3000 $\max\{2^{\bf k}, \bf m\}$ and
weight $\max\{{\bf k}, {\bf m}\}$. The group of characters of
$F_{\bf k, \bf m}$ is isomorphic to the group $F_{\bf m, \bf k}$.
For different pairs of cardinals $({\bf k}, {\bf m})$, the
topological spaces $F_{\bf k, \bf m}$ are nonhomeomorphic.
\end{prop}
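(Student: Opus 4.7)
The plan is to verify the four assertions in turn: the first three by direct computation, and the non-homeomorphism claim via two topological invariants that together recover the pair $({\bf k},{\bf m})$.

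Local compactness is immediate since $\prod_{{\bf k}}\Z_p$ is compact by Tychonoff's theorem and $\sum_{{\bf m}}\Z_p$ is discrete. For the cardinality one notes $|\prod_{{\bf k}}\Z_p|=p^{{\bf k}}=2^{{\bf k}}$ for infinite ${\bf k}$ and $|\sum_{{\bf m}}\Z_p|={\bf m}$ for infinite ${\bf m}$, yielding $\max\{2^{{\bf k}},{\bf m}\}$ overall. For the weight one combines $w(X\times Y)=w(X)\cdot w(Y)$ with $w(\prod_{{\bf k}}\Z_p)={\bf k}$ (from the cylinder base and the matching character bound at a point) and $w(\sum_{{\bf m}}\Z_p)={\bf m}$, giving $\max\{{\bf k},{\bf m}\}$. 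The character group then follows from three standard Pontryagin facts: $(\prod_I G_i)^{*}\cong\bigoplus_I G_i^{*}$ for compact abelian $G_i$, $(\bigoplus_I G_i)^{*}\cong\prod_I G_i^{*}$ for discrete $G_i$, and the self-duality $\Z_p^{*}\cong\Z_p$; combining these yields $F_{{\bf k},{\bf m}}^{*}\cong\bigoplus_{{\bf k}}\Z_p\times\prod_{{\bf m}}\Z_p\cong F_{{\bf m},{\bf k}}$.

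The non-homeomorphism assertion is the substantive part, and the plan is to introduce two homeomorphism invariants that separately extract ${\bf k}$ and ${\bf m}$. The first is the character $\chi$, namely the minimum cardinality of a local base at a point; at any point of $F_{{\bf k},{\bf m}}$ a local base is inherited from the compact factor (the discrete factor contributing $1$), so $\chi={\bf k}$ when ${\bf k}$ is infinite and $\chi=1$ when ${\bf k}=0$. The second is the cofinality $c$ of the family of compact subsets under inclusion, i.e.\ the least cardinality of a family of compact subsets such that every compact is contained in some member. The key observation is that a subset of $F_{{\bf k},{\bf m}}$ is compact if and only if its projection to the discrete factor $\sum_{{\bf m}}\Z_p$ is finite; hence the sets $\prod_{{\bf k}}\Z_p\times F$ for finite $F\subset{\bf m}$ are cofinal in the compacts, and any cofinal subfamily must cover every index in ${\bf m}$ through its singleton containments, giving cardinality at least ${\bf m}$. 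Thus $c={\bf m}$ for infinite ${\bf m}$ and $c=1$ for ${\bf m}=0$. Since $(\chi,c)$ is a homeomorphism invariant and the map $({\bf k},{\bf m})\mapsto(\chi,c)$ is injective on the pairs under consideration, different pairs yield non-homeomorphic spaces.

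The step expected to require most care is the characterization of compact subsets used in computing $c$; this rests on the observation that the continuous image of a compact set in a discrete space is finite, after which the remainder of the argument is routine.
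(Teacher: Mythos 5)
Your argument is correct. Note, however, that the paper offers no proof of this proposition at all --- it is introduced with the words ``the easy-to-prove properties \dots are collected in the following proposition'' --- so there is nothing to compare against; your write-up simply supplies the omitted verification. The first three assertions are indeed routine, and your duality computation is the standard one. The only part requiring a real idea is the non-homeomorphism claim, and your choice of invariants is well adapted to it: weight and cardinality alone do \emph{not} separate all admissible pairs (e.g.\ $F_{2^{\aleph_0},\aleph_1}$ and $F_{2^{\aleph_0},\aleph_2}$ when $\aleph_1<2^{\aleph_0}$, which is exactly the situation exploited later in Proposition~\ref{prop:compact.topology.aleph.1}), whereas the pair consisting of the character $\chi$ and the cofinality of the compact subsets does, and both are manifestly topological invariants (homogeneity of the group makes $\chi$ independent of the point). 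Two small points of hygiene: the sentence ``a subset is compact if and only if its projection to the discrete factor is finite'' should read ``closed with finite projection'' for the ``if'' direction, though only the ``only if'' direction is used; and the lower bound $\chi(\prod_{\bf k}\Z_p)\ge{\bf k}$ deserves one explicit line (a putative local base of smaller cardinality would involve only $<{\bf k}$ coordinates, and a subbasic neighbourhood determined by an unused coordinate would contain no member of it).
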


Below we give general facts about topologies on Abelian groups
generated by subgroups of the character group.

We denote the entire group of characters of an Abelian group $A$,
i.e., the group of all possible homomorphisms of $A$ to the circle
$S^1 =\{e^{it}\}$, by $A^{\star}$. If $A$ is endowed with a group
topology, then we denote the group of topological characters,
i.e., continuous homomorphisms, of the Abelian topological group
$A$ to the circle by $A^{\ast}$. Note that $A^{\ast}$ is always a
subgroup of $A^{\star}$; if the topology of $A$ is discrete, then
these two groups coincide.

As usual, a subset $X \subset A^{\star}$ is said to separate
vectors if, for any different $v, w \in A$, there exists an $h \in
X$ such that $h(v) \neq h(w)$. Any set $X \subset A^{\star}$
generates a diagonal map
\begin{equation}\label{eq:2.Diag}
\Delta_X: A \longrightarrow (S^1)^X.
\end{equation}
A set $X$ separates vectors if and only if the diagonal map
(\ref{eq:2.Diag}) is a monomorphism. Note that $X$ separates
vectors if and only if so does the subgroup $\langle X\rangle$
generated by this set.

As is known \cite{Pont73}, for any nonzero element $v \in A$,
there exists a character $\phi_v \in A^{\star}$ such that
$\phi_v(v) \neq 1$. Uniting such characters over all elements $v
\in A$, we obtain a set $X \subset A^{\star}$ separating the
elements of $A$. Let $H = \langle X\rangle$ be the subgroup
generated by this set; as mentioned above, $H$ separates points as
well. If $|A| = {\bf m}$, where ${\bf m}$ is an infinite cardinal,
then, obviously, we have $|H| \leq {\bf m}$. Note also that the
diagonal map $\Delta_H$ implements an isomorphism of the group $A$
onto its image in $\mathop{\prod}\limits_{h \in H}(S^1)_h$.

Given a subgroup $H \subset A^{\star}$, we denote the minimal
topology on $A$ with respect to which all maps in $H$ are
continuous by $\tau_H$. This topology coincides with the minimal
topology with respect to which  the diagonal map~(\ref{eq:2.Diag})
is continuous (the image is assumed to be endowed with the product
topology). Obviously, $\tau_H$ is Hausdorff if and only if $H$
separates points. Recall that a topology $\tau $ on a group $G$ is
said to be totally bounded if, for any neighborhood $U$ of zero,
there exists a finite set $\{0,g_1,\ldots ,g_k\}$ of elements of
$G$ such that the family $\{U,g_1+U,\ldots ,g_k+U\}$ covers the
entire group.

Topologies determined by various subgroups $H \subset A^{\star}$
were studied by Comfort and Ross~\cite{Comfort.Ross}; we collect
the results of~\cite{Comfort.Ross} needed later on in the
following theorem.

\begin{teo}\cite{Comfort.Ross}\label{teo:Comfort-Ross}
\textup1. For any subgroup $H \subset A^{\star}$, the topology
$\tau_H$ is totally bounded, and for each totally bounded topology
$\tau$ on $A$, there exists a subgroup $H \subset A^{\star}$ such
that $\tau = \tau_H$.

\textup2. The identity map $id : (A, \tau_{H_2}) \longrightarrow
(A, \tau_{H_1})$ is continuous if and only if $H_1 \subset H_2$.
\end{teo}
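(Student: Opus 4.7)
The plan is to treat the two assertions separately, with part 2 reducing to the key identification of the continuous character group of $(A, \tau_H)$ with $H$ itself.

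For the first direction of part 1, I would observe that $\tau_H$ is by definition the initial topology with respect to the diagonal map $\Delta_H : A \to (S^1)^H$, so a base of neighborhoods of $0$ consists of preimages under $\Delta_H$ of neighborhoods $V$ of $1$ in the compact group $(S^1)^H$. Since $(S^1)^H$ is compact, the closure of $\Delta_H(A)$ is compact, so can be covered by finitely many translates of any symmetric neighborhood $V' \subset V - V$; pulling these back and using density of $\Delta_H(A)$ to adjust the centers into $\Delta_H(A)$ yields a finite cover of $A$ by translates of $\Delta_H^{-1}(V)$, proving total boundedness. For the converse direction, given a totally bounded Hausdorff topology $\tau$ on $A$, set $H := (A,\tau)^{\ast}$, the continuous characters. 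By Weil's theorem, $(A,\tau)$ embeds densely in a compact Hausdorff group $\widetilde{A}$, whose Pontryagin dual $\widetilde{A}^{\ast}$ separates points. Restricting characters of $\widetilde{A}$ to $A$ yields a subgroup of $H$ separating points of $A$, and since the topology of $\widetilde{A}$ is the initial topology from $\widetilde{A}^{\ast}$ (again by Pontryagin theory for compact groups) and $A$ is dense, the subspace topology $\tau$ on $A$ coincides with the initial topology generated by these restricted characters, i.e.\ with $\tau_H$.

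For part 2, one implication is immediate from the universal property: if $H_1 \subset H_2$, every $h \in H_1$ is automatically $\tau_{H_2}$-continuous, so $\tau_{H_2}$ refines $\tau_{H_1}$ and the identity map is continuous. For the converse, continuity of the identity makes every $h \in H_1$ continuous on $(A,\tau_{H_2})$, so the crux is to show $(A,\tau_{H_2})^{\ast} = H_2$, after which $H_1 \subset H_2$ follows. Granting this identification, the proof of part 2 is complete.

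The main obstacle is therefore establishing the identity $(A,\tau_H)^{\ast} = H$ for an arbitrary subgroup $H \subset A^{\star}$. The inclusion $H \subset (A,\tau_H)^{\ast}$ is tautological. For the reverse inclusion, given a $\tau_H$-continuous character $\chi : A \to S^1$, I would use the fact that $\tau_H$ is the subspace topology induced from $(S^1)^H$ via $\Delta_H$, so $\chi$ factors through $\Delta_H(A)$ and extends uniquely by continuity to a character $\tilde{\chi}$ on the compact closure $K := \overline{\Delta_H(A)} \subset (S^1)^H$. By Pontryagin duality every continuous character of a closed subgroup of a compact abelian group extends to the ambient group, so $\tilde{\chi}$ is the restriction of a character of $(S^1)^H$. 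Since the Pontryagin dual of the compact product $(S^1)^H$ is the direct sum $\bigoplus_{H}\Z$ (continuous homomorphisms from a compact product depend on only finitely many coordinates), such a character is a finite monomial $(z_h)_h \mapsto \prod_i z_{h_i}^{n_i}$. Restricting back to $A$ shows $\chi = \prod_i h_i^{n_i} \in \langle H \rangle = H$, closing the argument.
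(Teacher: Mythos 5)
The paper offers no proof of this statement at all: it is imported verbatim as a known result of Comfort and Ross, with only the citation \cite{Comfort.Ross} standing in for an argument. Your reconstruction is correct and follows the standard Comfort--Ross line: total boundedness of $\tau_H$ from compactness of $\overline{\Delta_H(A)}\subset (S^1)^H$, the converse via Weil completion, and part~2 reduced to the identification $(A,\tau_H)^{\ast}=H$, proved by extending a continuous character to the closure and invoking that the dual of the product $(S^1)^H$ is $\bigoplus_H\Z$. Two small points you leave implicit but which are easily supplied: when $H$ does not separate points, $\Delta_H$ is not injective, and the factorization of a $\tau_H$-continuous character $\chi$ through $\Delta_H(A)$ uses that a continuous map into a Hausdorff space is constant on topologically indistinguishable pairs; and in the converse of part~1 the subgroup you actually exhibit is the group of restricted characters of the Weil completion, which is sandwiched between nothing and $(A,\tau)^{\ast}$ so that both induce $\tau$.
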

Now, we prove a general statement on the number of totally bounded
topologies on an Abelian group.

\begin{teo}\label{teo:number.of.topologies}
Let $A$ be an Abelian group of cardinality $|A| = {\bf m}$, where
${\bf m}$ is an infinite cardinal. Then, on $A$, there are
precisely $2^{2^{\bf m}}$ totally bounded Hausdorff group
topologies. Among them there are $2^{2^{\bf m}}$ pairwise
nonhomeomorphic topologies.
\end{teo}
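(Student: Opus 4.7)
The plan is to invoke Theorem~\ref{teo:Comfort-Ross} and reduce the problem to counting separating subgroups of the abstract character group~$A^{\star}$. Equipping $A^{\star}$ with its compact-open topology makes it a compact abelian group whose continuous Pontryagin dual is the discrete group~$A$, so Kakutani's theorem (recalled in the introduction) gives $|A^{\star}|=2^{\m}$. The upper bound is then immediate: $A^{\star}$ has at most $2^{|A^{\star}|}=2^{2^{\m}}$ subgroups, and by Theorem~\ref{teo:Comfort-Ross}(2) distinct subgroups correspond to distinct totally bounded Hausdorff topologies on~$A$.

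For the lower bound I first fix a separating subgroup $H_{0}\subset A^{\star}$ of cardinality $\le \m$, obtained by taking, for each nonzero $v\in A$, a character $\chi_{v}\in A^{\star}$ with $\chi_{v}(v)\ne 0$ and letting $H_{0}$ be the subgroup they generate. The core of the argument is to locate a subgroup $V\subset A^{\star}$ of cardinality~$2^{\m}$ with a basis-like structure, namely either a free abelian group $\bigoplus_{2^{\m}}\Z$ or a vector space $(\Z_{p})^{(2^{\m})}$ for some prime~$p$. Granted such a $V$ with basis $\{e_{\alpha}\}_{\alpha<2^{\m}}$, a support argument shrinks~$V$ to $V'\subset V$ of the same type and cardinality with $V'\cap H_{0}=0$: every element of $V\cap H_{0}$ has finite support in the $e_{\alpha}$, so the union of these supports has cardinality at most $|V\cap H_{0}|\cdot\aleph_{0}\le \m<2^{\m}$, and $V'$ is spanned by the remaining basis vectors. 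For any subgroup $W\le V'$ the sum $H_{0}+W$ is a separating subgroup of $A^{\star}$ (it contains~$H_{0}$), and the modular law yields $V'\cap(H_{0}+W)=W$, so the map $W\mapsto H_{0}+W$ is injective. Since $V'$ has at least $2^{2^{\m}}$ subgroups (subsets of its basis give that many distinct ones), this exhibits $2^{2^{\m}}$ pairwise distinct separating subgroups of~$A^{\star}$.

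The subgroup~$V$ is produced by a structural case analysis on~$A$. If $|A/pA|=\m$ for some prime~$p$, then $A^{\star}[p]\cong\Hom(A/pA,\Z_{p})$ is a $\Z_{p}$-vector space of cardinality $p^{\m}=2^{\m}$, hence of dimension~$2^{\m}$, and $V$ sits directly inside~$A^{\star}$. Otherwise $|A/pA|<\m$ for every~$p$, in which case one extracts $V$ from a divisible abelian group of cardinality~$2^{\m}$ naturally present inside~$A^{\star}$, namely either $(A/T)^{\star}$ or $D^{\star}$ (where $T$ is the torsion subgroup of~$A$ and $D$ is its maximal divisible subgroup, a direct summand). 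By K\"onig's cofinality bound $\operatorname{cf}(2^{\m})>\aleph_{0}$, the canonical decomposition of a divisible group of cardinality~$2^{\m}$ as $\Q^{(I)}\oplus\bigoplus_{q}\Z_{q^{\infty}}^{(J_{q})}$ must have some $|I|$ or $|J_{q}|$ equal to~$2^{\m}$; a $\Q^{(2^{\m})}$ summand yields $V=\bigoplus_{2^{\m}}\Z$, and a $\Z_{q^{\infty}}^{(2^{\m})}$ summand yields $V=(\Z_{q})^{(2^{\m})}$ as its $q$-torsion. Residual configurations, where $\m$ is absorbed by a reduced torsion part of~$A$, are handled by the same strategy applied to the quotient $A^{\star}\twoheadrightarrow T^{\star}$ and pulling back the subgroups obtained there.

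Finally, to upgrade ``distinct topologies'' to ``pairwise non-homeomorphic'', any homeomorphism between two topologies on the common underlying set~$A$ is a self-bijection of~$A$; the symmetric group on $A$ has cardinality $\m^{\m}=2^{\m}$, so every homeomorphism class of totally bounded Hausdorff topologies on~$A$ has at most $2^{\m}$ members. Partitioning the $2^{2^{\m}}$ distinct topologies into such classes yields at least $2^{2^{\m}}/2^{\m}=2^{2^{\m}}$ pairwise non-homeomorphic classes. The principal obstacle throughout is the structural case analysis producing the subgroup~$V$; once $V$ is available, the modular-law injectivity, the support trick, and the homeomorphism-class count are routine.
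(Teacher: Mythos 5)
Your overall architecture is the right one and largely coincides with the paper's: the upper bound via $|A^{\star}|=2^{\m}$, the use of Theorem~\ref{teo:Comfort-Ross} to convert distinct separating subgroups into distinct totally bounded Hausdorff topologies, and the final count of homeomorphism classes by observing that each class has at most $\m^{\m}=2^{\m}$ members are all exactly as in the paper. Your support trick and the modular-law injectivity of $W\mapsto H_{0}+W$ are also correct as stated, \emph{provided} the subgroup $V$ exists.

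The genuine gap is in the structural case analysis that is supposed to produce $V$, and you have put your finger on it yourself with the sentence about ``residual configurations'': that case is not actually covered, and it is the only nontrivial one. Concretely, take $A$ to be the torsion completion of $\bigoplus_{n}\Z/p^{n}\Z$, a reduced $p$-group of cardinality $\m=2^{\aleph_{0}}$ with countable basic subgroup. Then $A/qA=0$ for $q\neq p$ and $A/pA$ is countable, so your first case does not apply; and $A=\operatorname{Tors}(A)$ while the maximal divisible subgroup is trivial, so both candidate groups $(A/T)^{\star}$ and $D^{\star}$ in your second case are zero. Your fallback, ``apply the same strategy to $T^{\star}$,'' does not go through either, because here $T^{\star}=A^{\star}=\operatorname{Hom}(A,\Z_{p^{\infty}})$ is not divisible (it is not $p$-divisible), so the decomposition $\Q^{(I)}\oplus\bigoplus_{q}\Z_{q^{\infty}}^{(J_{q})}$ you invoke is unavailable, and its socle $\operatorname{Hom}(A/pA,\Z_{p})$ has cardinality only $2^{\aleph_{0}}=\m$, far short of the $2^{\m}$-dimensional vector space you need. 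So neither branch of your construction yields $V$ in this example.

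The repair is to run the dichotomy on the character group rather than on $A$: in $A^{\star}$ (or better, in $B=A^{\star}/H_{0}$, which is what the paper does in Lemma~\ref{lemma:subgroup.number}), either the torsion subgroup has cardinality $2^{\m}$ --- in which case some primary component does (K\"onig), and the socle of an uncountable abelian $p$-group has the full cardinality of the group, giving your $\Z_{p}$-vector space $V$ --- or the torsion-free quotient has cardinality, hence rank, $2^{\m}$, and a maximal independent family lifts to a free abelian subgroup of rank $2^{\m}$. In the example above it is the second branch that applies, which your analysis of $A$ cannot see. Note also that the paper's device of passing to the quotient $B=A^{\star}/H$ and pulling subgroups back makes your support trick and modular-law step unnecessary: every pulled-back subgroup automatically contains $H$ and is therefore separating.
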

\begin{proof}
Let $A^{\star}$ be the group of all characters of $A$; by
Kakutani's theorem, we have $|A^{\star}| = 2^{\bf m}$. Fix a
subgroup $H \lhd A^{\star}$ separating the points of characters.
As mentioned above, this subgroup can always be chosen so that
$|H| \leq \bf m$. Let $B = A^{\star}/H$; then $|B| = 2^{\bf m}$.

Given a subgroup $C \lhd B$, we set $\bar{C} = p^{-1}(C)$, where
$p:A \longrightarrow B$ is the natural projection. Since $H\lhd
\bar{C}$, it follows that the topology $\tau_{\bar{C}}$ on $A$ is
Hausdorff. Thus, to each subgroup of $B$ we have assigned a
totally bounded Hausdorff group topology on $A$. These topologies
are different for different subgroups. Indeed, according to
Theorem~\ref{teo:Comfort-Ross}, the identity map is not a
homeomorphism with respect to topologies corresponding to
different subgroups. In turn, Lemma~\ref{lemma:subgroup.number}
implies that the cardinality of the set of such topologies is
$2^{2^{\bf m}}$.

Finally, let us count  nonequivalent topologies among those
obtained above. By nonequivalent topologies we mean topologies
that cannot be mapped to each other by a homeomorphism, not
necessarily consistent with the group structure.

Since $|A| = {\bf m}$, it follows that the set of all self-maps of
$A$ has cardinality ${\bf m}^{\bf m} = 2^{\bf m}$ (as a set). It
follows that any topology on $A$ is equivalent to at most $2^{\bf
m}$ other topologies on this set. Thus, we obtain a partition of
$2^{2^{\bf m}}$ different totally bounded group topologies into
classes of equivalent topologies, each of which has cardinality at
most $2^{\bf m}$. Therefore, the number of classes is precisely
$2^{2^{\bf m}}$, and topologies from different classes are
pairwise nonequivalent. This completes the proof of the theorem.
\end{proof}
\begin{lemma}\label{lemma:subgroup.number}
Let $\bf m$ be an infinite cardinal, and let $B$ be an Abelian
group of cardinality $2^{\bf m}$. Then the cardinality of the set
of pairwise different subgroups in $B$ equals $2^{2^{\bf m}}$.
\end{lemma}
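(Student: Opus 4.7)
The plan is to prove the lower bound by exhibiting, inside $B$, a subgroup isomorphic either to $\sum_{\kappa}\Z$ or to $\sum_{\kappa}\Z_p$ for some prime $p$, where $\kappa = 2^{\m}$. Either such subgroup yields $2^{\kappa}$ pairwise different subgroups of $B$ at once (take the subgroups generated by the distinct subsets of a fixed basis), and the matching upper bound is immediate from $|B|=\kappa$, which forces $B$ to have at most $2^{\kappa}$ subsets, let alone subgroups.

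To produce such a large elementary subgroup of $B$, I would split on the torsion subgroup $T(B)$. Since $|B|=\kappa$, at least one of the cardinalities $|T(B)|$ and $|B/T(B)|$ equals $\kappa$. In the case $|B/T(B)|=\kappa$ the quotient is torsion-free and uncountable (as $\kappa\ge 2^{\aleph_0}$), so it embeds into its divisible hull $\Q\otimes_{\Z}(B/T(B))$, a $\Q$-vector space whose dimension is therefore $\kappa$. Lifting a maximal $\Z$-independent subset of the quotient back to $B$ yields $\Z$-independent elements of $B$, so $B$ already contains a copy of $\sum_{\kappa}\Z$.

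In the remaining case $|T(B)|=\kappa$, I would decompose $T(B)=\sum_q T_q$ into $q$-primary components. By K\"onig's theorem $\mathrm{cf}(2^{\m})>\m\ge\aleph_0$, and since there are only countably many primes, some $T_p$ has cardinality $\kappa$. I would then argue that the socle $T_p[p]=\{x\in T_p:px=0\}$ also has cardinality $\kappa$: multiplication by $p^{n-1}$ embeds $T_p[p^n]/T_p[p^{n-1}]$ into $T_p[p]$, so $|T_p[p^n]|\le|T_p[p]|^{n}$, and summing over $n$ gives $|T_p|\le\aleph_0\cdot|T_p[p]|$. Hence $|T_p[p]|=\kappa$, and as a $\Z_p$-vector space of dimension $\kappa$ it contains a copy of $\sum_{\kappa}\Z_p$.

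The only mildly delicate point is this torsion case: one must invoke K\"onig's inequality to extract a single primary component of full cardinality, and then verify that the socle of a $p$-primary group of cardinality $\kappa$ with $\mathrm{cf}(\kappa)>\aleph_0$ is already as large as the whole group. The torsion-free case and the upper bound are routine, and in both cases the desired $2^{\kappa}=2^{2^{\m}}$ pairwise distinct subgroups of $B$ are exhibited immediately.
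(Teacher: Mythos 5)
Your proof is correct and takes essentially the same route as the paper's: split off the torsion subgroup, reduce to a single primary component (respectively to the torsion-free quotient), and exhibit a $\Z_p$-vector space (respectively a free $\Z$-module) of rank $2^{\bf m}$, whose basis subsets generate $2^{2^{\bf m}}$ distinct subgroups. The only difference is that you spell out, via K\"onig's theorem and the socle filtration, two cardinality steps the paper treats as obvious.
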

\begin{proof}
The upper bound for the cardinality of the set of subgroups in $B$
is obvious and coincides with the cardinality of the set of all
subsets in $B$. Let us prove the lower bound.

Let $T = \text{Tors}(B)$ be the torsion subgroup of $B$; then $B'
= B/ T$ is torsion-free. The cardinality of at least one of the
groups $T$ and $B'$ equals $2^{\bf m}$, and the presence of the
required number of pairwise different subgroups in $T$ or in $B'$
implies the presence of the
 same number of subgroups in $B$. Thus, it suffices to prove the lemma
 for periodic groups and torsion-free groups separately.
Consider two cases.

1. Suppose that the group $B$ is periodic. Since $B$ decomposes
into the direct sum of its primary components \cite{KM72}, it
follows that at least one primary component has cardinality
$2^{\bf m}$, and it suffices to prove the required assertion for
the case of a primary $p$-group, where $p$ is a prime.

Suppose that $B$ is a $p$-group. Consider the subgroup $B(p)$
consisting of  all elements of order $p$. Obviously, $|B(p)| =
2^{\bf m}$, so that it suffices to prove the required assertion
for the group $B(p)$. In this case, $B(p)$ is a $\Z_p$-vector
space, and it has a Hamel basis of cardinality $2^{\bf m}$.
Different subsets of this basis generate different subgroups of
$B(p)$ and, therefore, of $B$. This completes the proof in the
case of periodic groups.

2. Finally, suppose that $B$ is torsion-free. Since $|B| = 2^{\bf
m}$, it follows that the
 (Pr\"ufer) rank of $B$ equals $2^{\bf m}$. Thus,  there is a free $\Z$-module of rank $2^{\bf
m}$ in $B$. As above, different subsets in a family of generators
of such a module generate different subgroups in the module and,
therefore,  in $B$. \end{proof}

\subsection{The Structure of the Family of Totally Bounded
Topologies}\label{sec:absolutely.bounded.topologies}

In what follows, we essentially use the structure of the groups
$F_{\bf k, \bf m}(p)$ and, in particular, the fact that these
group can be treated as $\Z_p$-vector spaces. The characters of
such groups are simply the elements of the dual space. In what
follows, we use this dual understanding of characters without
mention.

Consider the case where the group $A$ is a $\Z_p$-vector space of
dimension equal to an infinite cardinal ${\bf m}$. In this case,
Theorem~\ref{teo:number.of.topologies} is substantially simpler
and has a transparent combinatorial interpretation. The subgroup
$H \lhd A^{\star}$ of characters separating points is a subspace
of the same dimension ${\bf m}$. Next, we choose a Hamel basis
$\B$ in $H$ and extend it to a Hamel basis $\B^{\star}$ in the
entire space $A^{\star}$. Let $M = \B^{\star} \setminus \B$;
obviously, we have $|M| = 2^{\bf m}$.

Each set $N \subset M$ now determines a subgroup in $A$ being the
linear span of the system of vectors $\{N \cup \B\}$. The set of
 pairwise different subgroups thus defined has cardinality
$2^{2^{\bf m}}$ and can be used in
Theorem~\ref{teo:number.of.topologies} for the special case where
$A$ is a $\Z_p$-vector space. The Hausdorff group topologies on
$A$ thus obtained have an additional structure: on this set of
topologies, there arise lattices.

The construction described above shows that, for the set $M =
\B^{\star} \setminus \B$ of cardinality $2^{\bf m}$, the lattice
$(2^M; \subset, \cup, \cap)$ of subsets is embedded in the lattice
of totally bounded linearly invariant topologies on the space~$A$.
Therefore, the structures on the lattice $(2^M; \subset, \cup,
\cap)$ can be transferred to topologies. Thus, any linearly
ordered set $M$ generates a chain of length $2^{\bf m}$ of
one-to-one continuous maps. In the family of topologies under
consideration, there is the minimal topology $\tau_{\text{min}} =
\tau_{H_{\B}}$ and the maximal topology $\tau_{\text{max}} =
\tau_{A^{\star}}$.

Thus, on a vector space $A$ of dimension ${\bf m}$, there are
chains of  length $2^{\bf m}$ of one-to-one continuous maps, and
there are no longer chains. Such chains carry various
combinatorial structures, which correspond to different types of
ordering of the vectors in the Hamel basis.

Moreover, the existence of $2^{2^{\bf m}}$ independent subsets in
$M$ implies the existence of $2^{2^{\bf m}}$ totally bounded
linearly invariant Hausdorff topologies on $A$ such that the
identity map is not continuous  with respect to any pair of these
topologies.

\begin{prop}\label{teo:separated.min.subspace}
Let $A$ be a vector space of infinite dimension ${\bf m}$. Then
any separating subspace $H \subset V^{\star}$ contains a
separating subspace of dimension at most~${\bf m}$.
\end{prop}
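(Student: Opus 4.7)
The plan is to take $H'$ to be the $\Z_p$-linear span of one ``witness'' character for each nonzero element of $A$. Concretely, since $H \subset A^{\star}$ separates points, for every $v \in A \setminus \{0\}$ there exists some $\phi_v \in H$ with $\phi_v(v) \neq 0$; the axiom of choice gives a simultaneous selection $v \mapsto \phi_v$. I would then set
\[
H' \;=\; \Span_{\Z_p}\{\phi_v : v \in A \setminus \{0\}\} \;\subset\; H.
\]

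First I would verify the dimension bound. An infinite-dimensional $\Z_p$-vector space of dimension ${\bf m}$ has cardinality exactly ${\bf m}$, since each of its elements is a finite $\Z_p$-combination of basis vectors and the number of such finite combinations is ${\bf m}$. Thus the index set $A \setminus \{0\}$ has cardinality ${\bf m}$, so $H'$, being spanned by a family of size at most ${\bf m}$, has dimension at most ${\bf m}$.

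Next I would check that $H'$ separates the points of $A$. This is immediate from the construction: for any nonzero $v \in A$ the chosen character $\phi_v$ lies in $H'$ and satisfies $\phi_v(v) \neq 0$, so $\phi_v(v) \neq \phi_v(0)$.

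There is essentially no obstacle here; this is simply the $\Z_p$-linear refinement of the existence argument for a separating subgroup of cardinality at most ${\bf m}$ already sketched in the paragraph preceding Theorem~\ref{teo:Comfort-Ross}. The only subtlety worth mentioning explicitly is the cardinality computation for $A$, which is what lets us bound the generating family of $H'$ in terms of ${\bf m}$ rather than $2^{\bf m}$; everything else is a direct verification.
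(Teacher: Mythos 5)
Your proof is correct and follows essentially the same route as the paper: the paper picks one separating functional per \emph{pair} of distinct vectors and bounds the span's dimension by ${\bf m}^2={\bf m}$, while you pick one functional per nonzero vector and bound by $|A\setminus\{0\}|={\bf m}$; these are the same argument up to the standard linearity reduction (for distinct $v,w$ one applies $\phi_{v-w}$, so checking separation from $0$ suffices, a one-line remark you should make explicit since you only verify $\phi_v(v)\neq\phi_v(0)$).
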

\begin{proof}
Indeed, given any pair of different vectors in the space $A$, it
suffices to take a  linear functional on $H$ separating them. The
dimension of the linear span of such functionals does not exceed
${\bf m}^2 = {\bf m}$. \end{proof}

\begin{prop}
Let $\tau$ be a totally bounded linearly invariant Hausdorff
topology on a vector space $A$ of infinite dimension ${\bf m}$.
Then there is a set $M$ of cardinality $2^{\bf m}$ such that, on
the set of totally bounded linearly invariant Hausdorff topologies
on $A$, there is a lattice isomorphic to $(2^M; \subset, \cup,
\cap)$. Moreover, the initial topology $\tau$ is either minimal or
maximal in this lattice. In particular, any totally bounded
linearly invariant Hausdorff topology on $A$ can be included in a
linear chain of length $2^{\bf m}$ of different topologies.
\end{prop}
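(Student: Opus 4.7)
My plan is to apply Theorem~\ref{teo:Comfort-Ross} to write $\tau=\tau_H$ for some $H\subset A^{\star}$ separating points; since $\tau$ is linearly invariant and characters of the $\Z_p$-vector space $A$ are linear functionals, $H$ is automatically a $\Z_p$-subspace (and in fact additive subgroups and $\Z_p$-subspaces of $A^{\star}$ coincide, as $\Z_p$ has no proper subgroups). By Kakutani's theorem $|A^{\star}|=2^{\bf m}$, so $\dim_{\Z_p}A^{\star}=2^{\bf m}$. Since $\dim H+\dim(A^{\star}/H)=2^{\bf m}$ in cardinal arithmetic, at least one of $\dim H=2^{\bf m}$ or $\dim(A^{\star}/H)=2^{\bf m}$ holds; these two alternatives drive two symmetric constructions in which $\tau$ sits at the bottom or the top of the lattice.

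In the case $\dim(A^{\star}/H)=2^{\bf m}$, I extend a Hamel basis $\B$ of $H$ to a basis of $A^{\star}$ by a set $M$ with $|M|=2^{\bf m}$, and set $H_N:=H+\Span(N)$ for $N\subset M$. Every $H_N\supset H$ separates points, so each $\tau_{H_N}$ is totally bounded, linearly invariant, and Hausdorff, and $\tau_{H_\emptyset}=\tau$. In the case $\dim H=2^{\bf m}$, I use Proposition~\ref{teo:separated.min.subspace} to choose a separating subspace $H_0\subset H$ with $\dim H_0\le {\bf m}$; then $\dim(H/H_0)=2^{\bf m}$, so I extend a basis of $H_0$ to a basis of $H$ by a set $M\subset H$ with $|M|=2^{\bf m}$ and set $H_N:=H_0+\Span(N)$. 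Each $H_N\supset H_0$ again separates points, and $H_M=H$ so $\tau_{H_M}=\tau$.

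In both cases I will verify that $N\mapsto \tau_{H_N}$ is a lattice embedding of $(2^M;\subset,\cup,\cap)$ into the poset of totally bounded linearly invariant Hausdorff topologies on $A$. Monotonicity and injectivity are immediate from Theorem~\ref{teo:Comfort-Ross}(2), which moreover gives $N_1\subset N_2\iff H_{N_1}\subset H_{N_2}\iff \tau_{H_{N_1}}\le \tau_{H_{N_2}}$; hence $\tau$ is minimal in the first construction and maximal in the second. Compatibility with joins, $H_{N_1\cup N_2}=H_{N_1}+H_{N_2}$, is immediate from the definitions. The main technical point, and the only real obstacle, is the meet identity $H_{N_1\cap N_2}=H_{N_1}\cap H_{N_2}$; this reduces to the observation that, since the vectors parametrising the lattice lie inside a fixed Hamel basis (namely $\B\cup M$, or the basis of $H_0$ together with $M$), any vector common to $\Span(\B\cup N_1)$ and $\Span(\B\cup N_2)$ has its unique basis expansion supported in $\B\cup(N_1\cap N_2)$. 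Granting the lattice isomorphism, the final chain assertion follows by fixing a well-ordering of $M$ of order type $2^{\bf m}$ and taking initial segments: these yield a strictly increasing family of $2^{\bf m}$ subspaces $H_N$, and hence a chain of $2^{\bf m}$ distinct totally bounded linearly invariant Hausdorff topologies on $A$ containing $\tau$.
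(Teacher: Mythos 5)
Your proposal is correct and follows essentially the same route as the paper: represent $\tau$ as $\tau_H$ via the Comfort--Ross theorem, then either extend a Hamel basis of $H$ to one of $A^{\star}$ (placing $\tau$ at the bottom) or, when $\dim H = 2^{\bf m}$, pass to a separating subspace of dimension at most ${\bf m}$ via Proposition~\ref{teo:separated.min.subspace} and extend its basis to one of $H$ (placing $\tau$ at the top). The only differences are cosmetic --- your case split on $\dim(A^{\star}/H)$ versus the paper's on $\dim H$, and your explicit verification of the meet and join identities, which the paper leaves implicit.
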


\begin{proof}
Consider the space $H$ of linear functionals generating the given
topology. If the dimension of $H$ is less than $2^{\bf m}$, then
we take the Hamel basis $\B$ in $H$, extend it to a Hamel basis
$\B^{\star}$ in $A^{\star}$, and set $M = \B^{\star} \setminus
\tilde{\B}$. In this case, $(2^M; \subset, \cup, \cap)$ is the
required lattice, and the given topology is minimal in this
lattice.

Suppose that the dimension of $H$ equals $2^{\bf m}$. According to
Proposition~\ref{teo:separated.min.subspace}, $H$ contains a
separating subspace $Q \subset H$ of dimension $\bf m$. We take a
Hamel basis $\B$ in  $Q$ and extend it to a Hamel basis
$\tilde{\B}$ in $H$. As above, we set $M = \tilde{\B} \setminus
\B$. The desired lattice is again $(2^M; \subset, \cup, \cap)$,
and the topology determined by $H$ is  maximal in it.
\end{proof}

\begin{prop}
Let ${\bf k}$ be an infinite cardinal, and let
$A=F_{k,0}=\prod_{\bf k}\Z_p$. Then the subspace $H_0 \subset
A^{\star}$ generated by the coordinate projections of $A$ is a
minimal separating space.
\end{prop}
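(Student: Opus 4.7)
The plan is to split the statement into two parts: verify that $H_0$ separates points of $A$, and then show that no proper subspace of $H_0$ separates points. The separation property is immediate: if $v = (v_\alpha) \in A$ is nonzero, then some coordinate $v_\beta \in \Z_p$ is nonzero, so the projection $\pi_\beta \in H_0$ detects $v$.

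For minimality the key structural observation is that the coordinate projections $\{\pi_\alpha\}_{\alpha \in \mathbf{k}}$ form a Hamel basis of $H_0$ over $\Z_p$. Linear independence is trivial by evaluating any relation $\sum c_\alpha \pi_\alpha = 0$ on the standard basis vectors $e_\beta \in A$. Consequently $H_0 \cong \bigoplus_{\mathbf{k}} \Z_p$, and its full algebraic dual satisfies $H_0^{\star} \cong \prod_{\mathbf{k}} \Z_p = A$ via the pairing $(h,v) \mapsto h(v)$; concretely, to $v \in A$ one associates the linear functional $\Phi_v \in H_0^{\star}$ determined by $\Phi_v(\pi_\alpha) = v_\alpha$, and then $\Phi_v(h) = h(v)$ for every $h \in H_0$ by linearity.

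Now I would conclude as follows. Suppose $H' \subsetneq H_0$ is a proper subspace. Extending a Hamel basis of $H'$ to one of $H_0$ and taking the coordinate functional dual to any added basis vector produces a nonzero $\Phi \in H_0^{\star}$ vanishing on $H'$. Transporting through the isomorphism $H_0^{\star} \cong A$ above, one obtains a nonzero $v \in A$ such that $h(v) = \Phi_v(h) = 0$ for all $h \in H'$. Hence $H'$ fails to separate $v$ from $0$, so $H_0$ is minimal among separating subspaces of $A^{\star}$.

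There is no real obstacle here beyond bookkeeping: the only step requiring any care is the identification $H_0^{\star} \cong A$ \emph{via the evaluation pairing}, because that is what lets us convert a functional on $H_0$ annihilating $H'$ into an element of $A$ which $H'$ cannot separate from $0$. Once that identification is in place, the minimality claim reduces to the elementary fact that any proper subspace of a vector space has nonzero annihilator in the full algebraic dual.
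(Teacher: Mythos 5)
Your proof is correct, but it takes a genuinely different route from the paper's. The paper argues topologically: it observes that $\tau_{H_0}$ is exactly the Tychonoff product topology on $A=\prod_{\bf k}\Z_p$, hence compact, so for any separating (hence Hausdorff-topology-inducing) subspace $H\subset H_0$ the continuous bijection $(A,\tau_{H_0})\to(A,\tau_H)$ must be a homeomorphism, and the Comfort--Ross correspondence then forces $H=H_0$. You instead argue purely algebraically: the coordinate projections form a Hamel basis of $H_0$, so $H_0\simeq\sum_{\bf k}\Z_p$ and the evaluation pairing identifies $H_0^{\star}$ with $\prod_{\bf k}\Z_p=A$; since every proper subspace of $H_0$ has a nonzero annihilator in $H_0^{\star}$, it fails to separate the corresponding nonzero vector of $A$ from $0$. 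Both arguments are sound. The paper's is shorter given the machinery it has already set up (Comfort--Ross and compactness of the product topology), and it localizes the reason for minimality in compactness. Yours is self-contained, uses no topology at all, and in effect verifies the hypothesis of the paper's subsequent Theorem~\ref{teo:separating.subspaces} (that the natural map ${\bf e}:A\to H_0^{\star}$ is an isomorphism) together with the easy direction of that theorem (isomorphism implies minimality); in that sense your argument anticipates the general criterion rather than treating this proposition as an isolated fact.
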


\begin{proof}
Obviously, the subspace $H_0$ itself is separating. Suppose that
there exists a separating subspace $H \subset H_0$. Consider the
one-to-one continuous map generated by the embeddings $(A,
\tau_{H_0}) \longrightarrow (A, \tau_H)$. The topology of
$\tau_{H_0}$ coincides with the Tychonoff product topology on $A$
and, therefore, is compact. Since a continuous one-to-one map of a
compact space to any Hausdorff space is a homeomorphism, the
separating subspace $H$ cannot be proper.
\end{proof}


The following result shows that there are no other minimal
separating subspaces.

\begin{teo}\label{teo:separating.subspaces}
For a $\Z_p$-vector space $A$, a subspace $H \subset A^{\star}$ is
a minimal separating space if and only if the natural map ${\bf
e}: A \longrightarrow H^{\star}$ is an isomorphism.
\end{teo}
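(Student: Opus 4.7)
My plan is to work with the natural map $e \colon A \to H^{\star}$, $v \mapsto e_v$ where $e_v(h) = h(v)$, and the bilinear pairing $A \times H \to \Z_p$ that underlies it. The basic observation is that $e$ is injective precisely when $H$ separates the points of $A$, so in both implications the real content is controlling the surjectivity of $e$. The one piece of linear algebra I will use repeatedly is the ``Hahn--Banach over a field'' fact: for any proper subspace $H' \subsetneq H$ the annihilator $\{\psi \in H^{\star} : \psi|_{H'} = 0\}$ is nonzero, and for a codimension-one subspace $\ker\phi$ this annihilator is exactly the one-dimensional line $\Z_p \cdot \phi$.

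For the ``if'' direction, assume $e$ is an isomorphism. Injectivity already gives that $H$ separates. To see that $H$ is \emph{minimal} separating, suppose for contradiction that some proper subspace $H' \subsetneq H$ also separates. Using a vector-space complement of $H'$ in $H$, I produce a nonzero functional $\psi \in H^{\star}$ vanishing on $H'$. Because $e$ is surjective, $\psi = e_v$ for some $v \in A$, and $v \ne 0$ since $\psi \ne 0$. But then $h'(v) = 0$ for every $h' \in H'$, contradicting the assumption that $H'$ separates $v$ from $0$.

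For the ``only if'' direction, assume $H$ is minimal separating and, for contradiction, that $e(A) \subsetneq H^{\star}$. Pick $\phi \in H^{\star} \setminus e(A)$; necessarily $\phi \ne 0$, so $H' := \ker\phi$ is a proper (codimension-one) subspace of $H$. I claim $H'$ still separates, which will contradict the minimality of $H$. If not, there is a nonzero $v \in A$ with $h'(v) = 0$ for all $h' \in H'$; equivalently $e_v$ vanishes on $\ker\phi$. By the codimension-one remark above, $e_v = c\phi$ for some $c \in \Z_p$. If $c = 0$ then $e_v = 0$, contradicting that $H$ separates the nonzero vector $v$; if $c \ne 0$ then $\phi = c^{-1} e_v \in e(A)$, contradicting the choice of $\phi$. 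Hence $H'$ separates, and this is the contradiction.

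I do not expect a serious obstacle here: once one accepts that characters of a $\Z_p$-vector space are the same thing as $\Z_p$-linear functionals (because the image of such a character lies in the $p$-torsion of $S^1$, which is $\Z_p$), the entire argument is linear algebra over the field $\Z_p$. The only delicate point is bookkeeping the two dual roles of $H$ and $e(A) \subset H^{\star}$; the codimension-one trick in the ``only if'' direction is the one genuinely clever move, and it is precisely the tool that turns an element missing from $e(A)$ into a smaller separating subspace.
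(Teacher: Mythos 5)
Your proof is correct and follows essentially the same route as the paper: both directions hinge on passing to the codimension-one kernel of a functional $\phi \in H^{\star}$ and using (non)separation to produce the vector $v$ with ${\bf e}(v)$ proportional to $\phi$. The only cosmetic difference is that you run the surjectivity argument by contradiction, whereas the paper directly computes the preimage of $\phi$ as $\phi(y)v$ with $y(v)=1$.
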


\begin{proof}
Obviously, the natural map ${\bf e}$ is a monomorphism if and only
if $H \subset A^{\star}$ is a separating subspace.

Suppose that ${\bf e}$ is an isomorphism and $H_1 \subset H$ is a
proper subspace. Since the map $p: H^{\star} \longrightarrow
H_1^{\star}$ of the dual spaces  has nontrivial kernel, it follows
that so does the natural map ${\bf e}_1 = p \circ {\bf e}$ in
$H_1^{\star}$. Therefore, $H_1$ does not separate points, i.e.,
$H$ is minimal.

Now, let us suppose that $H$ is a minimal separating space and
show that ${\bf e}$ is an isomorphism. Take a nonzero element
$\phi \in H^{\star}$ and let $\ker \phi = H_1 \subset H$. By
minimality, $H_1$ is not separating; hence
 there exists a nonzero vector $v \in A$ such that
${\bf e}(v)(x) = x(v) =0$ for all $x \in H_1$. Since $H$ separates
points, it follows that there exists a $y \in H$ for which $y(v)
\neq 0$. Without loss of generality, we can assume  that $y(v) =
1$.

The subspace  $H_1$ has codimension 1, because this is the kernel
of a linear functional. Therefore, $H_1$ and $y$ generate $H$; in
particular, $\phi(y)\neq 0$. Finally, for any vector $z \in H$, we
have $z = x + ty$, where $x \in H_1$. This implies
$$
\phi(z) = t\phi(y) = t\phi(y)y(v) = ty(\phi(y)v) =  (x +
ty)(\phi(y)v) = {\bf e}(\phi(y)v)(z).
$$
Thus, $\phi = {\bf e}(\phi(y)v)$, so that ${\bf e}$ is an
epimorphism and, by the remark at the beginning of the proof, an
isomorphism.
\end{proof}

\begin{cor}
Let $A$ be a $\Z_p$-vector space for which $H \subset A^{\star}$
is a minimal separating subspace of infinite dimension ${\bf k}$.
Then there exists an isomorphism $i: \prod_{\bf k}\Z_p
\longrightarrow A$ such that $i^{\star}(H) \subset (\prod_{\bf
k}\Z_p)^{\star}$ coincides with the subspace generated by the
basis projections of $\prod_{\bf k}\Z_p$.
\end{cor}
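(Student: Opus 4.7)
The plan is to reduce the corollary to the previous theorem by dualizing. By Theorem~\ref{teo:separating.subspaces}, minimality of $H$ gives that the evaluation map ${\bf e}\colon A \longrightarrow H^{\star}$, $v \mapsto (h \mapsto h(v))$, is an isomorphism of $\Z_p$-vector spaces. Since $H$ is a $\Z_p$-vector space of infinite dimension ${\bf k}$, choose any Hamel basis $\{e_\alpha\}_{\alpha \in I}$ with $|I| = {\bf k}$. The assignment $e_\alpha \mapsto \1_\alpha$ to the standard basis vectors produces an isomorphism $H \cong \sum_{\bf k}\Z_p$, and by the standard adjunction $(\sum_{\bf k}\Z_p)^{\star} \cong \prod_{\bf k}\Z_p$ (a linear functional on a direct sum is a choice of value on each basis vector). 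Composing these, I obtain an isomorphism $i\colon \prod_{\bf k}\Z_p \longrightarrow A$.

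To identify $i^{\star}(H)$ explicitly, I track what the inverse $i^{-1}$ does. Unwinding the identifications, $i^{-1}(v) \in \prod_{\bf k}\Z_p$ is precisely the tuple $(e_\alpha(v))_{\alpha \in I}$. Consequently, for any basis element $e_\alpha \in H$, the pullback $i^{\star}(e_\alpha) = e_\alpha \circ i$ sends $(x_\beta)_{\beta \in I} \in \prod_{\bf k}\Z_p$ to $x_\alpha$, which is exactly the $\alpha$-th coordinate projection of $\prod_{\bf k}\Z_p$. Since $\{e_\alpha\}$ spans $H$ and $i^{\star}$ is linear, $i^{\star}(H)$ equals the $\Z_p$-linear span of the coordinate projections, which is the desired subspace of $(\prod_{\bf k}\Z_p)^{\star}$.

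There is no real obstacle here: the content of the corollary is entirely contained in Theorem~\ref{teo:separating.subspaces}, and the only work is a bookkeeping check that the evaluation isomorphism $A \cong H^{\star}$ turns a Hamel basis of $H$ into coordinate projections on the product. The one point worth being careful about is the direction of the arrows: one dualizes a Hamel basis of $H$ (which lives in a direct sum) to coordinate functionals on a direct product, and this is why the basis projections, rather than some other natural family of characters, appear on the $\prod_{\bf k}\Z_p$ side.
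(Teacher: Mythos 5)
Your argument is correct and follows the paper's proof exactly: the paper likewise chooses a Hamel basis of $H$ to get $H \simeq \sum_{\bf k}\Z_p$ and then uses the isomorphism $A \simeq H^{\star} \simeq \prod_{\bf k}\Z_p$ furnished by Theorem~\ref{teo:separating.subspaces}. Your extra bookkeeping verifying that the basis elements of $H$ pull back to coordinate projections is a welcome elaboration of a step the paper leaves implicit.
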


\begin{proof}
Indeed, it suffices to choose a Hamel basis in $H$; this
determines an isomorphism $H \simeq \sum_{\bf k}\Z_p$, whence $A
\simeq H^{\star} \simeq \prod_{\bf k}\Z_p$.
\end{proof}

We define the {\it logarithm} of a cardinal $\bf m$ by
$$
\text{Ln}({\bf m}) = \{{\bf n}: {\bf m} = 2^{\bf n}\}.
$$
Obviously,  ${\bf k} \in \text{Ln}(2^{\bf k})$ for any cardinal
${\bf k}$. Moreover, the set $\text{Ln}({\aleph_0})$ is empty by
Cantor's theorem. Finally, the generalized continuum hypothesis
($2^{\aleph_{\a}} = \aleph_{\a+1}$) implies $\text{Ln}(2^{\bf k})
= \{{\bf k}\}$.

\begin{cor}
If the dimension $\bf m$ of a vector space $A$ satisfies the
condition $Ln({\bf m}) = \emptyset$, then $A^{\star}$ contains no
minimal separating subspace.
\end{cor}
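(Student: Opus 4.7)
The plan is to argue by contradiction using Theorem~\ref{teo:separating.subspaces}. I would suppose that $H \subset A^{\star}$ is a minimal separating subspace, invoke that theorem to get that the evaluation map ${\bf e}: A \longrightarrow H^{\star}$ is an isomorphism of $\Z_p$-vector spaces, and then compare dimensions to force ${\bf m}$ to lie in the image of the exponential, contradicting the assumption $\text{Ln}({\bf m}) = \emptyset$.

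Concretely, I would set ${\bf k} = \dim_{\Z_p} H$ and first note that ${\bf k}$ must be infinite: otherwise $H^{\star}$ would be finite-dimensional, but $A \cong H^{\star}$ and $\dim_{\Z_p} A = {\bf m}$ is infinite by hypothesis. Choosing a Hamel basis of $H$ gives $H \cong \sum_{\bf k}\Z_p$, whence $H^{\star} \cong \prod_{\bf k}\Z_p$.

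The core step is then the cardinal/dimension computation $\dim_{\Z_p}\prod_{\bf k}\Z_p = 2^{\bf k}$. On one hand, the cardinality of $\prod_{\bf k}\Z_p$ equals $p^{\bf k} = 2^{\bf k}$, because ${\bf k}$ is infinite and $p$ is finite. On the other hand, any infinite-dimensional $\Z_p$-vector space $V$ can be written as a direct sum of $\dim_{\Z_p} V$ copies of $\Z_p$, which gives $|V| = |\Z_p|\cdot\dim_{\Z_p} V = \dim_{\Z_p} V$; applied to $V = \prod_{\bf k}\Z_p$, this yields the asserted equality of dimension and cardinality.

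Combining these facts, ${\bf m} = \dim_{\Z_p} A = \dim_{\Z_p} H^{\star} = 2^{\bf k}$, so ${\bf k} \in \text{Ln}({\bf m})$, contradicting $\text{Ln}({\bf m}) = \emptyset$. The only substantive point is the identity $\dim_{\Z_p}\prod_{\bf k}\Z_p = 2^{\bf k}$, which is really just cardinal arithmetic over a finite field; everything else is a direct application of Theorem~\ref{teo:separating.subspaces}, so I expect no serious obstacle beyond being careful that the isomorphism furnished by that theorem preserves dimension.
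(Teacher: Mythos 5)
Your argument is correct and is essentially the proof the paper intends (the corollary is stated without proof, but it is meant to follow from Theorem~\ref{teo:separating.subspaces} and the preceding corollary exactly as you describe): a minimal separating subspace of infinite dimension ${\bf k}$ forces $A \simeq H^{\star} \simeq \prod_{\bf k}\Z_p$, whose dimension equals its cardinality $2^{\bf k}$, so ${\bf k}\in\text{Ln}({\bf m})$. The key cardinal computation $\dim_{\Z_p}\prod_{\bf k}\Z_p = |\prod_{\bf k}\Z_p| = 2^{\bf k}$ is handled correctly, and your dismissal of the finite-dimensional case is the right touch.
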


\section{Locally Compact Topologies}\label{sec:loc.comp.topologies}

In this section, we analyze the set of locally compact topologies
arising on the group $\prod_{\bf n}\Z_p$, where $\bf n$ is an
infinite cardinal.

Let $G$ be an infinite compact Abelian group; then, according to
Kakutani's theorem \cite{Kakutani43}, we have $|G| = 2^{\bf m}$,
where $m=|G^{\ast}|$. Since the character group $G^{\ast}$ is
discrete, it follows that if any element of $G$ has order $p$,
then the group $G$ is isomorphic to one of the groups $G \simeq
\prod_{\bf n}\Z_p$, where ${\bf n} \in \text{Ln}(|G|)$, with the
natural product topology. For different ${\bf n} \in
\text{Ln}(|G|)$, the topological groups $\prod_{\bf n}\Z_p$ are
nonhomeomorphic, because, according to Proposition
\ref{prop:structure.F_k_m}, the weight of $\prod_{\bf n}\Z_p$
equals ${\bf n}$. Finally, note that under the generalized
continuum hypothesis there exists only one compact topology on
each direct product $\prod_{\bf n}\Z_p$. Thus, the following
assertion is valid.
\begin{prop}\label{prop:compact.topology}
If ${\bf n}$ is an infinite cardinal, then the cardinality of the
set of compact Hausdorff group topologies on the group $G \simeq
\prod_{\bf n}\Z_p$ equals $|\text{Ln}(|G|)|$. Moreover, each ${\bf
n}' \in \text{Ln}(|G|)$ corresponds to precisely one compact
topology, and its weight equals ${\bf n}'$.
\end{prop}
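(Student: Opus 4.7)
The plan is to parameterize compact Hausdorff group topologies on $G$ by their continuous character groups via Pontryagin duality. Let $\tau$ be such a topology and set $G^{\ast} = (G,\tau)^{\ast}$. Since every $g \in G$ satisfies $pg = 0$, every continuous character $\chi : (G,\tau) \to S^1$ takes values in the cyclic subgroup of $p$-th roots of unity; hence $G^{\ast}$ is naturally a $\Z_p$-vector space. Being the dual of a compact group, $G^{\ast}$ is discrete, so a choice of Hamel basis yields $G^{\ast} \cong \bigoplus_{{\bf n}'} \Z_p$, where ${\bf n}' = \dim_{\Z_p} G^{\ast}$. Kakutani's theorem then forces $|G| = 2^{{\bf n}'}$, i.e.\ ${\bf n}' \in \text{Ln}(|G|)$.

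Next I would invoke Pontryagin duality to recover the topological group $(G,\tau)$ from $G^{\ast}$:
\[
(G,\tau) \;\cong\; (G^{\ast})^{\ast} \;\cong\; \Bigl(\bigoplus_{{\bf n}'} \Z_p\Bigr)^{\ast} \;\cong\; \prod_{{\bf n}'} \Z_p,
\]
the last factor carrying its usual compact product topology. This identification simultaneously shows that $\tau$ is, up to topological group isomorphism, the canonical compact topology on $\prod_{{\bf n}'} \Z_p$, and, via Proposition~\ref{prop:structure.F_k_m}, assigns to $(G,\tau)$ the weight ${\bf n}'$.

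For existence in the opposite direction, fix any ${\bf n}' \in \text{Ln}(|G|)$. Both $G$ and $\prod_{{\bf n}'} \Z_p$ are $\Z_p$-vector spaces of cardinality $2^{{\bf n}'}$, and hence have the same $\Z_p$-dimension $2^{{\bf n}'}$, so an arbitrary abstract isomorphism transports the standard product topology back to a compact Hausdorff group topology on $G$ realizing the parameter ${\bf n}'$. That different values of ${\bf n}' \in \text{Ln}(|G|)$ yield pairwise nonhomeomorphic topologies is then immediate from the weight computation of Proposition~\ref{prop:structure.F_k_m}, since distinct ${\bf n}'$ give distinct weights.

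The main obstacle is the step recovering $(G,\tau)$ topologically from the single invariant ${\bf n}'$, which is what turns the correspondence between compact topologies and elements of $\text{Ln}(|G|)$ into a bijection. This step rests jointly on Pontryagin duality and on the exponent-$p$ hypothesis, without which $G^{\ast}$ would not reduce to a discrete $\Z_p$-vector space classified up to isomorphism by its dimension alone.
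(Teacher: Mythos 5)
Your proof is correct and follows essentially the same route as the paper: identify the continuous character group of a compact exponent-$p$ group as a discrete $\Z_p$-vector space $\bigoplus_{{\bf n}'}\Z_p$, recover $(G,\tau)\cong\prod_{{\bf n}'}\Z_p$ by Pontryagin duality, place ${\bf n}'$ in $\text{Ln}(|G|)$ via Kakutani's theorem, and separate the resulting topologies by their weights using Proposition~\ref{prop:structure.F_k_m}. The only addition is your explicit existence step (transporting the product topology along an abstract isomorphism of $\Z_p$-vector spaces of equal dimension), which the paper leaves implicit.
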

\begin{teo}\label{teo:loc.compact.topologies}
Let $\t$ be a locally compact Hausdorff topology on the group $G
\simeq \sum_{\bf m}\Z_p$. Then the group $(G, \t)$ is
topologically isomorphic to $F_{\bf k, \bf n}$, where the
cardinals $\bf k$ and $\bf n$ satisfy the condition $\max \{2^{\bf
k}, {\bf n}\} = {\bf m}$.
\end{teo}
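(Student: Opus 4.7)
The plan is to exploit the fact that every nonzero element of $G$ has order~$p$ so strongly that the Pontryagin structure theory of locally compact abelian (LCA) groups pins $(G,\tau)$ down completely.

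First I would show that $(G,\tau)$ is totally disconnected and hence admits a compact open subgroup $K$. The connected component of the identity $G_0$ is a connected LCA group, so the classical structure theorem writes $G_0\cong\R^n\times C$ with $C$ compact connected abelian. Since $G$ has exponent $p$, the torsion-free factor $\R^n$ forces $n=0$; and a compact connected abelian group is divisible, so exponent $p$ forces $C=\{0\}$. Hence $G_0=\{0\}$, $(G,\tau)$ is totally disconnected, and van~Dantzig's theorem produces a compact open subgroup $K\subset G$.

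Next I would identify $K$ and $G/K$ via duality and linear algebra. Since $K$ is compact, abelian, and of exponent $p$, its character group $K^{\ast}$ is a discrete $\Z_p$-vector space; choosing a Hamel basis identifies $K^{\ast}\cong\sum_{\bf k}\Z_p$ for some cardinal ${\bf k}$, and Pontryagin duality then yields $K\cong\prod_{\bf k}\Z_p$ as topological groups (with the convention ${\bf k}=0$ in the degenerate case $K=\{0\}$, i.e.\ when $\tau$ is discrete). The quotient $G/K$ is discrete, because $K$ is open, and is a $\Z_p$-vector space, so a Hamel basis yields $G/K\cong\sum_{\bf n}\Z_p$ for some cardinal~${\bf n}$.

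Then I would split the short exact sequence. Because $G$ is a $\Z_p$-vector space, any subspace is a direct summand, so I pick an algebraic complement $L\subset G$ with $G=K\oplus L$. The openness of $K$ together with $L\cap K=\{0\}$ forces $\{0\}$ to be open in $L$, so $L$ inherits the discrete topology. The addition map $K\times L\to G$ is then a continuous bijection; it is also open, because a basic open set $U\times\{l\}$ in $K\times L$ is sent to the translate $U+l$ of an open subset of $G$. Hence it is a topological isomorphism $(G,\tau)\cong\prod_{\bf k}\Z_p\times\sum_{\bf n}\Z_p=F_{\bf k,\bf n}$. The cardinality condition $\max\{2^{\bf k},{\bf n}\}={\bf m}$ then follows from $|G|={\bf m}$ together with Proposition~\ref{prop:structure.F_k_m}.

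The step I expect to be the main obstacle is the very first one: producing a compact open subgroup. It requires ruling out every possible connected direction in $(G,\tau)$ using the exponent-$p$ hypothesis, which implicitly invokes the full structure theorem for connected LCA groups. Once the compact open subgroup is in hand, the remainder is a routine blend of Pontryagin duality, $\Z_p$-linear algebra, and the verification that an algebraic splitting against a compact open subgroup is automatically topological.
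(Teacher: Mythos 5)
Your proposal is correct and follows essentially the same route as the paper: establish total disconnectedness from the exponent-$p$ hypothesis, invoke van Dantzig to get a compact open subgroup, identify it as $\prod_{\bf k}\Z_p$ and the (discrete) quotient as $\sum_{\bf n}\Z_p$, and check that an algebraic splitting against the open compact subgroup is automatically a topological isomorphism. The only difference is cosmetic — you spell out the total-disconnectedness step via the structure theorem for connected LCA groups, where the paper simply asserts it, and you split by choosing a complement rather than a section of the quotient map.
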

\begin{proof} All elements of the group $G$ and, therefore, all elements
of its character group $G^{\ast}$ have order  $p$. It follows that
the group $(G, \t)$ is totally disconnected.

According to van~Dantzig's theorem \cite{Dantzig.32}, there exists
a compact open subgroup $H \subset G$. Let $p: G \longrightarrow N
= G/H$ be the natural projection, and let $i: H \longrightarrow G$
be an embedding. Since $H$ is open, it follows that $N$ is
discrete and, therefore, $N \simeq \sum_{\bf n}\Z_p$, where $\bf
n$ is a cardinal. According to
Proposition~\ref{prop:compact.topology}, we have $H \simeq
\prod_{\bf k}\Z_p$ for an appropriate cardinal $\bf k$ (naturally,
the direct product is endowed with the compact product topology).

Next, choosing a Hamel basis in the $\Z_p$-vector space $N$, we
take a splitting $q: N \longrightarrow G$ of the projection $p$.
The homomorphism $q$ is automatically continuous, and its image is
discrete in $G$.

Finally, consider the homomorphism $i{\times}q: H{\times}N
\longrightarrow G$ defined by $i{\times}q(h, n) = i(h) + q(n)$.
Obviously, $i \times q$ is an algebraic isomorphism. Since $H$ is
open, it follows that this is a  local homeomorphism between
$H{\times}N$ and $G$, which, in turn, implies that $i{\times}q$ is
an isomorphism of topological groups.

Thus, we conclude that $(G, \t)$ is isomorphic to $F_{\bf k, \bf
n}$ as a topological group. The equality $\max \{2^{\bf k}, {\bf
n}\} = {\bf m}$ is now obvious.
\end{proof}

The following proposition shows what happens to locally compact
topologies on the group $\sum_{\bf n}\Z_p$, where ${\bf n} =
\aleph_1$ or $2^{\aleph_0}$, under the negation of the continuum
hypothesis.
\begin{prop}\label{prop:compact.topology.aleph.1}
If $\aleph_1 < 2^{\aleph_0}$, then there is only one (discrete)
locally compact topology on the group $\sum_{\aleph_1}\Z_p$. Under
the same assumption, there exist at least five pairwise
nonhomeomorphic locally compact topologies on the group
$\sum_{2^{\aleph_0}}\Z_p$.
\end{prop}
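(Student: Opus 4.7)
The plan is to reduce both assertions to the classification of locally compact topologies supplied by Theorem~\ref{teo:loc.compact.topologies}, which says that any locally compact Hausdorff group topology on $\sum_{\bf m}\Z_p$ realises the group as a topological $F_{\bf k,\bf n}$ with $\max\{2^{\bf k},{\bf n}\}={\bf m}$ (using the paper's convention that finite subscripts equal $0$ and at least one of ${\bf k},{\bf n}$ is infinite). The last clause of Proposition~\ref{prop:structure.F_k_m} then guarantees that distinct admissible pairs give nonhomeomorphic topological groups, so everything reduces to enumerating admissible pairs under the hypothesis $\aleph_1<2^{\aleph_0}$.

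For the first statement (${\bf m}=\aleph_1$), I would solve $\max\{2^{\bf k},{\bf n}\}=\aleph_1$. If ${\bf k}$ were infinite then ${\bf k}\ge\aleph_0$ gives $2^{\bf k}\ge 2^{\aleph_0}>\aleph_1$, contradicting the maximum. Hence ${\bf k}=0$, the first factor of $F_{\bf k,\bf n}$ is trivial, and the condition forces ${\bf n}=\aleph_1$. The only admissible pair is $(0,\aleph_1)$, corresponding to $\sum_{\aleph_1}\Z_p$ with the discrete topology.

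For the second statement (${\bf m}=2^{\aleph_0}$), I would exhibit five admissible pairs:
\[
(0,2^{\aleph_0}),\quad (\aleph_0,0),\quad (\aleph_0,\aleph_0),\quad (\aleph_0,\aleph_1),\quad (\aleph_0,2^{\aleph_0}).
\]
Each satisfies $\max\{2^{\bf k},{\bf n}\}=2^{\aleph_0}$; the crucial pair $(\aleph_0,\aleph_1)$ is admissible precisely because $\aleph_1<2^{\aleph_0}$, which is where the hypothesis enters. Every $F_{\bf k,\bf n}$ listed is a $\Z_p$-vector space of cardinality $2^{\aleph_0}$, hence of $\Z_p$-dimension $2^{\aleph_0}$, and so algebraically isomorphic to $\sum_{2^{\aleph_0}}\Z_p$. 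Proposition~\ref{prop:structure.F_k_m} then yields pairwise nonhomeomorphic topologies, giving the required five.

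There is no genuine obstacle here; the proof is bookkeeping once Theorem~\ref{teo:loc.compact.topologies} is available. The one place deserving care is verifying that no exotic infinite ${\bf k}$ with $2^{\bf k}=\aleph_1$ can sneak into the first case, but Cantor's theorem together with $2^{\bf k}\ge 2^{\aleph_0}>\aleph_1$ for ${\bf k}\ge\aleph_0$ excludes this immediately.
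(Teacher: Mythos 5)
Your proof is correct and follows essentially the same route as the paper: reduce to the classification of Theorem~\ref{teo:loc.compact.topologies}, enumerate the admissible pairs $({\bf k},{\bf n})$ with $\max\{2^{\bf k},{\bf n}\}$ equal to the given cardinal, and distinguish the resulting topologies via Proposition~\ref{prop:structure.F_k_m}. The only (immaterial) difference is the fifth pair in the second case: you take $(\aleph_0,2^{\aleph_0})$, which is distinct from $(\aleph_0,\aleph_1)$ precisely because $\aleph_1<2^{\aleph_0}$, whereas the paper's list uses $\aleph_2$ in the second coordinate (and its first coordinates are evidently a misprint of $\aleph_0$ for $2^{\aleph_0}$, as the list following Corollary~\ref{cor:loc.compact.topology.aleph.1} confirms), which needs the extra observation that $\aleph_1<2^{\aleph_0}$ forces $\aleph_2\le 2^{\aleph_0}$.
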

\begin{proof}
According to Theorem~\ref{teo:loc.compact.topologies}, the group
$\sum_{\aleph_1}\Z_p$ with a locally compact topology is
isomorphic to a group $F_{\bf k, \bf n}$. Taking into account the
inequality $\aleph_1 < 2^{\aleph_0}$, we see that the only
possible choice of the cardinals is ${\bf k}=0$ and ${\bf n} =
\aleph_1$, i.e., the topology is discrete.

The groups $F_{\bf k, \bf n}$ are locally compact and, under the
assumption $\aleph_1 < 2^{\aleph_0}$, algebraically isomorphic to
$\sum_{2^{\aleph_0}}\Z_p$ for the following pairs  $({\bf k, \bf
n})$ of cardinals:
$$
\hskip5pt (2^{\aleph_0}, 0),\hskip5pt (2^{\aleph_0}, \aleph_0),
\hskip5pt (2^{\aleph_0}, \aleph_1), \hskip5pt (2^{\aleph_0},
\aleph_2), \hskip5pt (0, 2^{\aleph_0}).
$$
Here the first topology is compact and the last one is discrete.
All of these topologies are pairwise nonhomeomorphic by virtue of
Proposition~\ref{prop:structure.F_k_m}.
\end{proof}

Theorem~\ref{teo:loc.compact.topologies} and Proposition
\ref{prop:compact.topology.aleph.1} have the following immediate
corollary.

\begin{cor}\label{cor:loc.compact.topology.aleph.1}
The following conditions are equivalent:

\noindent \textup{(1)} the continuum hypothesis holds: $\aleph_1 =
2^{\aleph_0}$;

\noindent \textup{(2)} the group $\sum_{\aleph_1}\Z_p$ admits a
compact group topology;

\noindent \textup{(3)} the group $\sum_{\aleph_1}\Z_p$ admits a
nondiscrete locally compact group topology;

\noindent \textup{(4)} the group $\prod_{\aleph_0}\Z_p$ admits at
most four locally compact group topologies;

\noindent \textup{(5)} the group $\prod_{\aleph_0}\Z_p$ admits
precisely four locally compact group topologies.
\end{cor}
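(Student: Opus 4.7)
The plan is to close two implication cycles, namely $(1)\Rightarrow(2)\Rightarrow(3)\Rightarrow(1)$ and $(1)\Rightarrow(5)\Rightarrow(4)\Rightarrow(1)$. The whole argument rests on Theorem~\ref{teo:loc.compact.topologies}, which classifies every locally compact Hausdorff topology on a $\Z_p$-vector space as some $F_{\bf k, \bf n}$, together with Proposition~\ref{prop:structure.F_k_m}, which distinguishes such groups up to homeomorphism. The key arithmetic input is that, as a $\Z_p$-vector space, $\prod_{\aleph_0}\Z_p$ has dimension $2^{\aleph_0}$ (an infinite-dimensional vector space over a finite field has cardinality equal to its dimension); thus, as an abelian group, $\prod_{\aleph_0}\Z_p\cong \sum_{2^{\aleph_0}}\Z_p$, and under CH also $\prod_{\aleph_0}\Z_p\cong \sum_{\aleph_1}\Z_p$.

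For $(1)\Rightarrow(2)$, transporting the compact product topology on $\prod_{\aleph_0}\Z_p$ through the algebraic isomorphism above yields a compact topology on $\sum_{\aleph_1}\Z_p$. The step $(2)\Rightarrow(3)$ is immediate, since an infinite compact Hausdorff group cannot be discrete. For $(3)\Rightarrow(1)$, I would apply Theorem~\ref{teo:loc.compact.topologies} with ${\bf m}=\aleph_1$ to a nondiscrete locally compact topology on $\sum_{\aleph_1}\Z_p$: the group is realised as some $F_{\bf k, \bf n}$ with $\bf k$ necessarily infinite (otherwise $F_{0,{\bf n}}=\sum_{\bf n}\Z_p$ is discrete) and $\max\{2^{\bf k},{\bf n}\}=\aleph_1$. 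Then $2^{\aleph_0}\le 2^{\bf k}\le \aleph_1$, which combined with $\aleph_1\le 2^{\aleph_0}$ forces CH.

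For $(1)\Rightarrow(5)$, I would enumerate the pairs $({\bf k},{\bf n})$ for which $F_{\bf k, \bf n}$ has cardinality $2^{\aleph_0}$, i.e.\ $\max\{2^{\bf k},{\bf n}\}=2^{\aleph_0}$. If ${\bf k}\ge \aleph_1$, then $2^{\bf k}\ge 2^{\aleph_1}>\aleph_1=2^{\aleph_0}$, which is impossible; so under CH one has ${\bf k}\in\{0,\aleph_0\}$. The case ${\bf k}=0$ forces ${\bf n}=\aleph_1$, while ${\bf k}=\aleph_0$ leaves ${\bf n}\in\{0,\aleph_0,\aleph_1\}$. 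The four resulting pairs $(0,\aleph_1)$, $(\aleph_0,0)$, $(\aleph_0,\aleph_0)$, $(\aleph_0,\aleph_1)$ correspond to $\tau_d$, $\tau_c$, $\tau_0$, $\tau_1$ respectively, and by Proposition~\ref{prop:structure.F_k_m} they are pairwise non-homeomorphic. The implication $(5)\Rightarrow(4)$ is trivial, and for $(4)\Rightarrow(1)$ I would argue contrapositively: if $\aleph_1<2^{\aleph_0}$, then the algebraic isomorphism $\prod_{\aleph_0}\Z_p\cong \sum_{2^{\aleph_0}}\Z_p$ transfers the five pairwise non-homeomorphic locally compact topologies of Proposition~\ref{prop:compact.topology.aleph.1} onto $\prod_{\aleph_0}\Z_p$, contradicting~(4).

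The main obstacle is essentially bookkeeping rather than any new idea: one must keep straight the distinction between cardinality and $\Z_p$-dimension, carefully enumerate the admissible pairs $({\bf k},{\bf n})$ under CH, and verify that the four resulting groups $F_{\bf k, \bf n}$ coincide with the four canonical topologies $\tau_c,\tau_d,\tau_0,\tau_1$ singled out in the introduction. Once these identifications are in place, the previously established Theorem~\ref{teo:loc.compact.topologies} together with Propositions~\ref{prop:structure.F_k_m} and \ref{prop:compact.topology.aleph.1} do essentially all the work.
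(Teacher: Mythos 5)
Your proposal is correct and follows essentially the same route as the paper, which declares the corollary an "immediate" consequence of Theorem~\ref{teo:loc.compact.topologies} and Proposition~\ref{prop:compact.topology.aleph.1} and then simply lists the four groups $F_{\aleph_0,0}$, $F_{\aleph_0,\aleph_0}$, $F_{\aleph_0,2^{\aleph_0}}$, $F_{0,2^{\aleph_0}}$ realizing condition (5). You have merely written out the bookkeeping the authors leave implicit: the identification $\prod_{\aleph_0}\Z_p\cong\sum_{2^{\aleph_0}}\Z_p$, the enumeration of admissible pairs $({\bf k},{\bf n})$ under CH, and the use of Proposition~\ref{prop:structure.F_k_m} to distinguish them.
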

In conclusion, we mention that the four locally compact
topological groups in condition (5) are precisely as follows:

$F_{\aleph_0, 0}$ (this is a compact topology of weight
$\aleph_0$);

$F_{\aleph_0, \aleph_0}$ (a noncompact nondiscrete topology of
weight $\aleph_0$);

$F_{\aleph_0, 2^{\aleph_0}}$ (a noncompact nondiscrete topology of
weight $2^{\aleph_0}$);

$F_{0, 2^{\aleph_0}}$ (the discrete topology of weight
$2^{\aleph_0}$).

This list completes the proof of the second assertion of
Theorem~\ref{teo:principal} stated in the introduction.

\bigskip

{\parindent =0.7truecm

}

\end{document}